\newtheorem{proposition}{Proposition}[section]
\newtheorem{lemma}[proposition]{Lemma}
\newtheorem{corollary}[proposition]{Corollary}
\newtheorem{theorem}[proposition]{Theorem}
\theoremstyle{definition}
\newtheorem{example}[proposition]{Example}
\theoremstyle{remark}
\newtheorem{remark}[proposition]{Remark}
\newtheorem{remarks}[proposition]{Remarks}
\newcommand{\thlabel}[1]{\label{th:#1}}
\newcommand{\thref}[1]{Theorem~\ref{th:#1}}
\newcommand{\selabel}[1]{\label{se:#1}}
\newcommand{\lelabel}[1]{\label{le:#1}}
\newcommand{\leref}[1]{Lemma~\ref{le:#1}}
\newcommand{\colabel}[1]{\label{co:#1}}
\newcommand{\coref}[1]{Corollary~\ref{co:#1}}
\newcommand{\relabel}[1]{\label{re:#1}}
\newcommand{\reref}[1]{Remark~\ref{re:#1}}
\newcommand{\reslabel}[1]{\label{res:#1}}
\newcommand{\resref}[1]{Remarks~\ref{res:#1}}
\newcommand{\eqlabel}[1]{\label{eq:#1}}
\newcommand{\equref}[1]{(\ref{eq:#1})}
\newcommand\smi{\mbox{$S^{-1}$}}
\def\ot{\otimes}
\def\va{\varepsilon}
\def\l{\lambda}
\def\va{\varepsilon}
\def\v{\varphi}
\def\rh{\rightharpoonup}
\def\lh{\leftharpoonup}
\def\a{\alpha}
\def\b{\beta}
\def\cal{\mathcal}
\def\Inn{\mbox{\rm Inn}}
\def\Id{\mbox{\rm Id}}
\newcommand{\hrat}{H^{*\rm rat}}
\def\equal#1{\smash{\mathop{=}\limits^{#1}}}
\begin{document}
\title[$S^4$ for Co-Frobenius (C)QT Hopf algebras]
{On the antipode of a co-Frobenius (co)quasitriangular Hopf algebra}
\author{Margaret Beattie}
\address{Department of Mathematics and Computer Science, Mount Allison
University,\newline Sackville, NB E4L 1E6, Canada}
\email{mbeattie@mta.ca}\thanks{The first author's research is
supported by NSERC}
\author{Daniel Bulacu}
\address{Faculty of Mathematics and Informatics, University
of Bucharest, Str. Academiei 14, RO-010014 Bucharest 1, Romania}
\email{dbulacu@al.math.unibuc.ro}
\thanks{The second author had support from a PDF at Mount Allison University.
He thanks Mount Allison University (Canada) for their warm
hospitality.}

\begin{abstract}
We extend to the co-Frobenius case a result of Drinfeld and Radford
related to the fourth power of the antipode of a finite dimensional
(co) quasitriangular Hopf algebra.
\end{abstract}
\maketitle

%%%%%%%%%%%%%%%%%%%%%%%%%%%%%%%%%%%%%%%%%%%%%%%%%%%%%%%
\section{Introduction and Preliminaries}\selabel{0}
%%%%%%%%%%%%%%%%%%%%%%%%%%%%%%%%%%%%%%%%%%%%%%%%%%%%%%%%%
Throughout this paper, $H$ will denote a (not necessarily finite
dimensional) co-Frobenius Hopf algebra over a field $k$. All maps
are assumed to be $k$-linear. We use the conventional
Sweedler-Heyneman notation for the Hopf algebra comultiplication:
$\Delta (h)=h_1\ot h_2$, $h\in H$ (summation understood). As
usual, the $H^*$-bimodule structure on $H$ and the $H$-bimodule
structure on $H^*$ are given by
$$
l^* \rightharpoonup h \leftharpoonup m^* = m^*(h_1)h_2 l^*(h_3)
\mbox{  and  }
(h \rightharpoonup m^* \leftharpoonup  l)(m) = m^*(lmh)
$$
for all $h,l,m \in H$,  $l^*,m^* \in H^*$. The antipode of $H$ is
denoted $S$ with composition inverse $S^{-1}$. The group of
grouplike elements of $H$ is denoted $G(H)$ and the  grouplikes of
$H^0$, namely the set of algebra maps from $H$ to $k$, by $G(H^0)$.
We assume familiarity  with the basic theory of Hopf algebras; see
\cite{dnr, mo, sw} for example.\vspace{1.5mm}
\par In this short note, we see that for $H$ co-Frobenius and
quasitriangular (coquasitriangular), the grouplike elements in $H$
($H^*$ respectively) which define $S^4$ as an inner (co-inner)
automorphism can be expressed in terms of the modular elements in
$H$ and $H^*$, thus extending results of Drinfeld and Radford.
\vspace{1.5mm}
\par Recall that a Hopf algebra $H$ is co-Frobenius if
$\hrat$, the unique maximal rational submodule of $H^*$, is nonzero,
or, equivalently, if  the space of left or right integrals for $H$,
denoted $\int_l^{H^*} $ and $\int_r^{H^*}$ respectively, is nonzero.
 It was shown in \cite{bdr} that $H$ contains a distinguished
grouplike element $a$, which we also call the modular element of
$H$, such that for all $\lambda \in \int_l^{H^*}$,
\begin{equation}\eqlabel{a}
\lambda(h_1)h_2 = \lambda(h)a^{-1}
\mbox{  and  }
\lambda \circ S^2 = a^{-1} \rightharpoonup \lambda \leftharpoonup a.
\end{equation}
From the definition of a left integral and from \equref{a}, we have
that for all $h,l \in H$,
\begin{equation}\eqlabel{b}
\lambda(hl_2)l_1 = \lambda(h_2l)S(h_1) \mbox{ and  }
\lambda(hl_1)l_2 = \lambda(h_1l)S^{-1}(h_2)a^{-1}.
\end{equation}
\par For $\Gamma $ either a nonzero left or right integral for $H$
in $\hrat$, there are bijective maps from $H$ to $\hrat$ given by
\begin{equation}\eqlabel{bijection}
h \mapsto (h \rightharpoonup \Gamma) \mbox{  and  } h\mapsto (\Gamma
\leftharpoonup h).
\end{equation}

Let $\chi$ denote the generalized Frobenius automorphism of $H$
defined in \cite{bbt}, that is, for $\lambda \in \int_l^{H^*}$,
$\chi$ is the algebra automorphism of $H$ defined by
\begin{equation}\eqlabel{fx1i}
h\rh \l =\l \lh \chi(h),\mbox{ for all } h \in H.
\end{equation}

Then the algebra map  $\alpha :=\varepsilon \circ \chi \in H^*$ is
called the modular element for $H$ in $H^*$, terminology  justified
by the finite dimensional case; see \cite[Remark 2.2]{bbt}.
Moreover,
\begin{equation}\eqlabel{fx1ii}
\chi(h)=\alpha(h_2)S^{-2}(h_1) \mbox{ for all } h \in H.
\end{equation}

  Radford's formula for $S^4$ extends
to co-Frobenius Hopf algebras as follows.
\begin{theorem}\cite[Theorem 2.8]{bbt}\label{S4}
For $H$ co-Frobenius with modular elements $a \in H$,
$\alpha \in H^*$, then for all $h \in H$,
\begin{equation*}
S^4(h) = a(\alpha \rightharpoonup h \leftharpoonup \alpha^{-1})a^{-1} =
\alpha^{-1}(h_1)ah_2a^{-1}\alpha(h_3).
\end{equation*}
\end{theorem}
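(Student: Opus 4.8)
The plan is to reduce the claimed identity to an equality of rational functionals. Fix a nonzero left integral $\lambda\in\int_l^{H^*}$; by \equref{bijection} the map $x\mapsto x\rightharpoonup\lambda$ is injective on $H$, so it suffices to prove
$$\lambda\big(m\,S^4(h)\big)=\lambda\big(m\,a(\alpha\rightharpoonup h\leftharpoonup\alpha^{-1})a^{-1}\big)\qquad\text{for all }h,m\in H.$$
The strategy is then to carry every power of $S$ and every copy of $\alpha$ across $\lambda$ on each side until the two sides coincide.

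First I would assemble the auxiliary facts. Since $\alpha$ is an algebra map, $\alpha\circ S=\alpha^{-1}$ and $\alpha\circ S^2=\alpha$, hence also $\alpha^{-1}\circ S^{\pm1}=\alpha$ and $\alpha^{-1}(a^{-1})=\alpha(a)$. From \equref{fx1i}--\equref{fx1ii} one gets that $\chi$ is an algebra automorphism with $\lambda\circ\chi=\lambda$, that $\lambda(xy)=\lambda(\chi(y)x)$, and that $\chi(a)=\alpha(a)a$. Applying $\alpha$ to the first identity of \equref{a} and comparing with \equref{fx1ii} yields $\lambda\circ S^2=\alpha(a)^{-1}\lambda$, whence $\lambda\circ S^{2k}=\alpha(a)^{-k}\lambda$ for all $k\in\mathbb Z$. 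The decisive step is to apply the character $\alpha^{-1}$ to the two identities of \equref{b}: the first gives $\alpha^{-1}(l_1)\lambda(hl_2)=\alpha(h_1)\lambda(h_2l)$, that is $\lambda\big(h(l\leftharpoonup\alpha^{-1})\big)=\lambda\big((h\leftharpoonup\alpha)l\big)$; the second gives $\alpha^{-1}(l_2)\lambda(hl_1)=\alpha(a)\,\alpha(h_2)\lambda(h_1l)$, that is $\lambda\big(h(\alpha^{-1}\rightharpoonup l)\big)=\alpha(a)\,\lambda\big((\alpha\rightharpoonup h)l\big)$ (here one uses $\alpha^{-1}\circ S=\alpha$, respectively $\alpha^{-1}\circ S^{-1}=\alpha$ and $\alpha^{-1}(a^{-1})=\alpha(a)$). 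These two ``modular transfer'' identities let one slide $\alpha^{\pm1}$ from one tensor leg of a product to the other across $\lambda$, at the price of a factor $\alpha(a)^{\pm1}$.

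With these tools in hand I would reduce the two sides to a common expression. For the left side, write $m\,S^4(h)=S^4\big(S^{-4}(m)\,h\big)$ and use $\lambda\circ S^4=\alpha(a)^{-2}\lambda$ to get $\lambda(m\,S^4(h))=\alpha(a)^{-2}\lambda\big(S^{-4}(m)\,h\big)$. For the right side, expand $a(\alpha\rightharpoonup h\leftharpoonup\alpha^{-1})a^{-1}=\alpha^{-1}(h_1)\alpha(h_3)\,ah_2a^{-1}$ and then repeatedly feed the conjugating grouplikes $a,a^{-1}$ through the Nakayama relation (absorbing them via $\chi(a)=\alpha(a)a$ and $\lambda\circ\chi=\lambda$), convert conjugation by $a$ into $S^2$ via \equref{a}, replace each occurrence of $\chi$ by $S^{-2}$ plus a character via \equref{fx1ii}, and apply the two transfer identities to migrate the characters $\alpha^{-1}(h_1),\alpha(h_3)$ onto the same tensor leg; after these moves the right side collapses to $\alpha(a)^{-2}\lambda\big(S^{-4}(m)\,h\big)$ as well. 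Matching the two results and invoking injectivity of $x\mapsto x\rightharpoonup\lambda$ completes the proof.

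The main obstacle is precisely the bookkeeping in the last paragraph: one must route the Sweedler indices so that the four hidden applications of $S^{\pm2}$ (two from \equref{a}-type steps, two buried inside $\chi$ via \equref{fx1ii}) assemble into the single factor $S^{-4}$ on the variable $m$, while simultaneously the characters produced cancel the $\alpha(a)$-factors accumulated from the transfer identities and from $\lambda\circ S^{2}=\alpha(a)^{-1}\lambda$. Conceptually nothing deeper is at stake: \equref{a} describes the effect of $S^2$ on $\lambda$ as conjugation by $a$, \equref{fx1ii} describes it as twisting by the character $\alpha$, and Radford's $S^4$ formula is exactly the identity obtained by composing these two descriptions with each other twice.
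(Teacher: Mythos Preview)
The paper does not prove this theorem: it is quoted verbatim from \cite[Theorem~2.8]{bbt} as a preliminary result, with no argument given. There is therefore no ``paper's own proof'' to compare your proposal against.

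As for the proposal itself, your auxiliary facts are all correct: the identities $\lambda\circ\chi=\lambda$, $\chi(a)=\alpha(a)a$, $\lambda\circ S^{2}=\alpha(a)^{-1}\lambda$, and the two ``transfer'' identities obtained by applying $\alpha^{-1}$ to \equref{b} check out exactly as you state them. The reduction of the left-hand side to $\alpha(a)^{-2}\lambda(S^{-4}(m)h)$ is also fine. The gap is that the right-hand side computation is not actually carried out---you describe a procedure (``feed the grouplikes through the Nakayama relation'', ``convert conjugation by $a$ into $S^{2}$'', ``migrate the characters'') but do not execute it, and you explicitly flag the bookkeeping as the remaining obstacle. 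That bookkeeping is the entire content of the proof: one must verify that the powers of $\alpha(a)$ and the characters $\alpha^{\pm1}$ produced at each step really do cancel to leave $\alpha(a)^{-2}\lambda(S^{-4}(m)h)$, and a single sign or exponent error derails everything. As written, the proposal is a correct outline of a viable strategy (indeed close in spirit to how the result is proved in \cite{bbt}), but it stops short of being a proof. To close the gap you would need to actually perform the Sweedler-index computation for $\lambda\bigl(m\,a(\alpha\rightharpoonup h\leftharpoonup\alpha^{-1})a^{-1}\bigr)$ line by line.
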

Of course, if $H$ is not finite dimensional, then there is no
nonzero integral $t\in H$, but nonetheless many of the properties of
finite dimensional Hopf algebras   which are proved using integrals
(such as the $S^4$ formula above) carry over to infinite dimensional
co-Frobenius Hopf algebras using the integral in $H^*$ and the
modular elements. Another example, although not directly related to
the theme of this paper, is given in the next proposition as further
illustration of this point of view.
\begin{proposition}\label{tangent}(cf. \cite[Proposition
4]{radford}) Let $H$ be co-Frobenius with $\lambda, a, \alpha$ as
above. Then the following are equivalent. \par (i)  $S^2$ is
co-inner (or equivalently $S^{-2}$ is co-inner).
\par (ii) There are $\rho, \tau \in (H \otimes H)^*$ such that for
all $h,l \in H$, \begin{equation} \eqlabel{c} \lambda(lh) = \rho(h_1
\otimes l_1)\lambda(h_2  l_2)\tau(h_3 \otimes l_3).\end{equation}
\end{proposition}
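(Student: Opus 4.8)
The plan is to recast both conditions in terms of the generalized Frobenius automorphism $\chi$ from \equref{fx1i}--\equref{fx1ii}. Evaluating \equref{fx1i} at an element $m$ and unwinding the definitions of $\rightharpoonup$ and $\leftharpoonup$ yields the Nakayama-type identity
\[
\lambda(mh)=\lambda(\chi(h)m)\qquad(h,m\in H),
\]
and \equref{fx1ii} says $\chi(h)=\alpha(h_2)S^{-2}(h_1)$, that is, $\chi=S^{-2}\circ(\alpha\rightharpoonup-)$ with $\alpha\rightharpoonup h:=h_1\alpha(h_2)$. Since $\alpha\rightharpoonup-$ already has the ``generalized co-inner'' shape $h\mapsto\varepsilon(h_1)h_2\alpha(h_3)$ and grouplikes of $H^0$ are convolution invertible, a short computation using only $\varepsilon\circ\chi=\alpha$ and the fact that $S^{\pm 2}$ are coalgebra maps shows that $S^2$ is co-inner, say $S^2(h)=\beta(h_1)h_2\beta^{-1}(h_3)$, \emph{if and only if} there are $\mu,\nu\in G(H^0)$ with $\chi(h)=\mu(h_1)h_2\nu(h_3)$ for all $h$ (and then $\mu\ast\nu=\alpha$); the same computation shows that $S^2$ is co-inner iff $S^{-2}$ is (if $S^2(h)=\beta(h_1)h_2\beta^{-1}(h_3)$ then $S^{-2}(h)=\beta^{-1}(h_1)h_2\beta(h_3)$), which settles the parenthetical clause of (i).

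For $(i)\Rightarrow(ii)$, write $\chi(h)=\mu(h_1)h_2\nu(h_3)$ with $\mu,\nu\in G(H^0)$ as above; then the Nakayama identity gives $\lambda(lh)=\lambda(\chi(h)l)=\mu(h_1)\lambda(h_2l)\nu(h_3)$, so $\rho:=\mu\otimes\varepsilon$ and $\tau:=\nu\otimes\varepsilon$, regarded as elements of $(H\otimes H)^*$, satisfy \equref{c}.

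For $(ii)\Rightarrow(i)$ --- the substantial direction --- combine \equref{c} with the Nakayama identity to obtain, for all $h,l\in H$,
\[
\lambda(\chi(h)l)=\rho(h_1\otimes l_1)\,\lambda(h_2l_2)\,\tau(h_3\otimes l_3).
\]
Viewed as functions of $l$, the left side is $\lambda\leftharpoonup\chi(h)$ and the right side is $\rho(h_1\otimes-)\ast(\lambda\leftharpoonup h_2)\ast\tau(h_3\otimes-)$ (convolution in $H^*$), which lies in $\hrat$ since $\lambda\leftharpoonup h_2\in\hrat$ and $\hrat$ is a two-sided ideal of $H^*$. Because $k\mapsto\lambda\leftharpoonup k$ is a bijection $H\to\hrat$ by \equref{bijection}, one may ``cancel $\lambda$'': the integration-by-parts identities \equref{b} compute, for $f,g\in H^*$, the preimages of $f\ast(\lambda\leftharpoonup k)$ and of $(\lambda\leftharpoonup k)\ast g$ explicitly in terms of $f,g,S^{\pm 1}$ and $a$, and iterating these turns the displayed identity into a closed formula exhibiting $\chi(h)$ as $h_2$ flanked by two functionals $P,Q\in H^*$ built from $\rho,\tau$ by partial evaluation, composition with $S^{\pm 1}$, and translation by $a$. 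Via $S^{-2}=\chi\circ(\alpha^{-1}\rightharpoonup-)$ this also gives $S^{-2}(h)=P(h_1)h_2Q(h_3)$ for suitable $P,Q\in H^*$.

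The main obstacle is to upgrade $P,Q$ to \emph{grouplike} elements of $H^0$, as required by the definition of co-inner: stopping at convolution-invertible $P,Q$ would not suffice, since $S^2$ can be inner without being inner by a grouplike. Applying $\varepsilon$ to $S^{-2}(h)=P(h_1)h_2Q(h_3)$ forces $P\ast Q=\varepsilon$, so $Q=P^{-1}$ once $P$ is known to be convolution invertible (seen by running the analogous construction for $S^2$ in place of $S^{-2}$); then feeding $S^{-2}(h)=P(h_1)h_2P^{-1}(h_3)$ into the multiplicativity relation $S^{-2}(hh')=S^{-2}(h)S^{-2}(h')$, using once more the uniqueness provided by the bijections of \equref{bijection}, and reconciling with the $S^4$-formula (Theorem~\ref{S4}), one forces $P$ to be an algebra map, i.e.\ $P\in G(H^0)$. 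Hence $\chi(h)=\mu(h_1)h_2\nu(h_3)$ with $\mu,\nu\in G(H^0)$, which by the first paragraph is exactly condition (i).
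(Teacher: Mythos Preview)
Your overall route is the paper's: rewrite $\lambda(lh)=\lambda(\chi(h)l)$ via \equref{fx1i}, use \equref{b} to peel off the $l$-dependence, and cancel $\lambda$ through the bijection \equref{bijection} to reach $S^{-2}(h)=P(h_1)h_2\,Q(h_3)$ with $P(h)=\rho(h_1\otimes S(h_2))$ and $Q$ built from $\tau$, $a$, $\alpha$. The difficulty is your last paragraph.

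You read ``co-inner'' as ``conjugation by a grouplike of $H^0$'' and then try to force $P$ to be an algebra map. In this paper (as in Radford's, to which the proposition refers) co-inner means only that $\omega\in\mathcal{U}(H^*)$; see the opening sentence of the paper's own proof. So the upgrade is unnecessary --- and in fact false for the $P$ this construction produces. Take $H=k[G]$ with $G$ finite abelian (so $a=1$, $\alpha=\varepsilon$, $S^{\pm2}=\Id$), pick any $c:G\to k^\times$ that is \emph{not} a character, and set $\rho(g\otimes g')=c(g)$, $\tau(g\otimes g')=c(g)^{-1}$. Then \equref{c} holds (both sides equal $\delta_{gg',e}$), while $P(g)=\rho(g\otimes g^{-1})=c(g)$ is not multiplicative. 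Your appeal to $S^{-2}(hh')=S^{-2}(h)S^{-2}(h')$ and Theorem~\ref{S4} cannot repair this: the same $S^{-2}$ is realized by $P$ and by $P z$ for any $z\in\mathcal{U}(H^*)$ with $z(h_1)h_2z^{-1}(h_3)=h$, and such $z$ need not be algebra maps.

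What \emph{is} needed, and what you leave as a gesture, is that $P$ and $Q$ are \emph{two-sided} convolution inverses; $P\ast Q=\varepsilon$ alone does not suffice in the infinite-dimensional algebra $H^*$. The paper's device is short: from $S^{-2}(h)=P(h_1)h_2Q(h_3)$ together with $P\ast Q=\varepsilon$ one gets $P\rightharpoonup S^{-2}(h)=h\leftharpoonup P$; comparing with $P\rightharpoonup S^{-2}(h)=S^{-2}(h_1)\,(P\circ S^{-2})(h_2)$ and applying $\varepsilon$ yields $P\circ S^{-2}=P$, and likewise $Q\circ S^{-2}=Q$. Then $(Q\ast P)(h)=(Q\ast P)(S^{-2}(h))=P(h_1)Q(h_2)P(h_3)Q(h_4)=\varepsilon(h)$. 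Your alternative (``run the analogous construction for $S^2$'') might also close this gap, but as written it is not an argument.
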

\begin{proof}We outline the proof.
 If $S^{-2} = {\rm co-Inn}_\omega$,
i.e., $S^{-2}(h) = \omega^{-1}(h_1)h_2 \omega(h_3)$ for some $\omega
\in \cal{U}(H^*)$, then writing $\lambda(lh) = \lambda(\chi(h)l)$,
and using \equref{fx1ii}, it is easy to see that $\lambda(lh) =
\rho(h_1 \otimes l_1)\lambda(h_2l_2) \tau(h_3 \otimes l_3)$ where
$\rho = \omega^{-1} \otimes \varepsilon$ and $\tau = (\omega \otimes
\varepsilon)*(\alpha \otimes \varepsilon)$.
\par Conversely, note that $\lambda(lh) = (\lambda \leftharpoonup \chi(h))(l)$
and, use \equref{b} to obtain $$\rho(h_1 \otimes
l_1)\lambda(h_2l_2)\tau(h_3 \otimes l_3) = \rho(h_1 \otimes
S(h_2))\lambda(h_3l)\tau(h_5 \otimes S^{-1}(h_4)a^{-1}).$$  Thus
from  \equref{fx1i}  together with \equref{bijection}, one obtains
that
$$\chi(h) = \alpha(h_2)S^{-2}(h_1) = \rho(h_1 \otimes S(h_2))h_3
\tau(h_5 \otimes S^{-1}(h_4)a^{-1}) = \rho'(h_1)h_2 \tau'(h_3),$$
where $\rho',\tau'$ have the obvious definitions. Thus $S^{-2}(h) =
\rho'(h_1)h_2 \tau''(h_3)$ where $\tau''= \tau' * \alpha^{-1}$.
Applying $\varepsilon$, we see that $\rho' * \tau'' = \varepsilon$.
Now compute $\rho' \rightharpoonup S^{-2}(h) = h \leftharpoonup
\rho'$ to see that $\rho' = \rho'\circ S^{-2}$ and similarly $\tau''
= \tau'' \circ S^{-2}$. Then $\tau'' * \rho'(h) = \tau'' *
\rho'(S^{-2}(h))= \varepsilon(h)$ and so $\rho'$ and $\tau''$ are
inverse in $H^*$ and
  $S^{-2}$, and thus $S^2$, is co-inner.
\end{proof}

For $(H, R)$   quasitriangular then $S^2$ is an    inner
automorphism  induced by  $u$ or by $v:=S(u)^{-1}$ defined via
  $R $. Then $vu=uv$, and $S^4=\Inn_{vu}$. As well,
  (see  \equref{deltau} below),  $vu\in G(H)$. For $H$
finite dimensional,  it was proved independently by Drinfeld
\cite[Proposition 6.2]{drinfeld} and Radford \cite[Theorem
2]{radford} that $uv = vu = ag_\alpha$ where $a \in H$ and $\alpha
\in H^*$ are the modular elements as above, and $g_\alpha$ is a
grouplike element defined via
  $R$ and
$\alpha $. In this paper, we extend this formula  to not necessarily
finite dimensional quasitriangular co-Frobenius Hopf algebras with
an analogous result for coquasitriangular co-Frobenius Hopf
algebras. Note that this paper takes a new approach and that the
proof in the coquasitriangular case cannot be viewed as the formal
dual of the proof given in the quasitriangular case.

%%%%%%%%%%%%%%%%%%%%%%%%%%%%%%%%%%%%%%%%%%%%%%%%%%%%%%%%%%%%%%%%%%%%%%%%%%%%%%%
\section{$S^4$ when $(H, R)$ is a co-Frobenius quasitriangular Hopf algebra}\selabel{2}
%%%%%%%%%%%%%%%%%%%%%%%%%%%%%%%%%%%%%%%%%%%%%%%%%%%%%%%%%%%%%%%%%%%%%%%%%%%%%%%
\setcounter{equation}{0}  Throughout this section, as well as
co-Frobenius,  $(H, R)$ will be almost cocommutative or else
quasitriangular   with modular elements $a\in H$ and $\alpha \in
H^*$. For example, the tensor product of a finite dimensional
quasitriangular Hopf algebra and an infinite dimensional group
 algebra would satisfy the above conditions.

\par Recall that, for $H$ a Hopf algebra and
$R=R^1\otimes R^2=r^1 \otimes r^2\in \cal{U}(H \otimes H)$,
then $(H, R)$ is called almost cocommutative if for all $h\in H$,
\begin{equation} \eqlabel{qt4}
\Delta^{\rm cop}(h) = R \Delta(h) R^{-1}.
\end{equation}

Then by results of  Drinfeld \cite[Proposition 2.2]{drinfeld} (who
credits Lyubashenko) or Radford \cite[Proposition 1]{radford}, $S^2$
is an inner automorphism induced by $u:=S(R^2)R^1$ or by $v:=
S(u)^{-1}= S(U^1)U^2$ where $U:=R^{-1}= U^1 \otimes U^2$, i.e.,
$$
S^2(h) = uhu^{-1} = vhv^{-1} \mbox{ for all } h \in H.
$$

\begin{remark}\relabel{2.1}
If $S^2 = \Inn_w$, so that $\lambda \circ S^2 = a^{-1}
\rightharpoonup \lambda \leftharpoonup a = w^{-1} \rightharpoonup
\lambda \leftharpoonup w$, by \equref{fx1ii} we have
$\chi(w^{-1})w=\chi(a^{-1})a =\alpha(a^{-1})1$. If  $\va(w)=1$ then
$\a(w^{-1})=\a(a^{-1})$. \qed
\end{remark}

If, as well, the element $R$ satisfies the following,
\begin{eqnarray}
&&\Delta (R^1)\ot R^2=R^1\ot r^1\ot R^2r^2,\eqlabel{qt1}\\
&&R^1\ot \Delta (R^2)=R^1r^1\ot r^2\ot R^2,\eqlabel{qt2}\\
&&\va(R^1)R^2=1,~~\va(R^2)R^1=1, \eqlabel{qt3}
\end{eqnarray}
then $(H, R)$ is called quasitriangular. In this case,
\begin{equation}\eqlabel{invr}
U =S(R^1)\ot R^2=R^1\ot \smi (R^2), \mbox{ so that }  (S\ot S)(R)=R.
\end{equation}

Furthermore, for $u = S(R^2)R^1$ as defined above,
\begin{equation}\eqlabel{deltau}
\Delta(u)=(u\ot u)(R_{21}R)^{-1}=(R_{21}R)^{-1}(u\ot u), \mbox{
where  } R_{21}:= R^2\ot R^1,
\end{equation}
so that $uv = u S(u)^{-1} \in G(H)$.
\begin{remarks}\reslabel{inner}
(i) We note that if a co-inner automorphism $\v$ of an almost
cocommutative Hopf algebra $(H,R)$   is induced by a grouplike in
$H^0$ then $\varphi$ is inner.  Let $\gamma \in G(H^0)$  with
convolution inverse $\gamma^{-1}$ and let $\varphi \in {\rm Aut}(H)$
be the co-inner automorphism induced by $\gamma^{-1}$ as in
Proposition \ref{tangent}. Then
$$
\varphi(h) = \gamma^{-1}(h_1)h_2 \gamma(h_3) = w h w^{-1}
$$
where $w\in  W_\gamma = \{\gamma^{-1}(U^1)U^2, \gamma(R^1)R^2,
\gamma(U^2)U^1, \gamma^{-1}(R^2)R^1 \}$. To see this, note that by
\equref{qt4},
\[
\varphi(h) = [\gamma^{-1}(U^1h_2 R^1)U^2h_1R^2] \gamma(h_3) =
\gamma^{-1}(U^1)U^2 h \gamma^{-1}(R^1)R^2.
\]
The rest of the statement is shown by using \equref{qt4} in a
similar manner.

\par (ii) If $(H, R)$ is quasitriangular, the map from $G(H^0)$ to $G(H)$
given by
\[
\eta \mapsto a_{\eta}:= \eta(R^1)R^2
\]
is a group homomorphism. Also recall that $(H, \tilde{R})$ is
quasitriangular where $\tilde{R}:= R_{21}^{-1} = R^2\otimes S(R^1) =
S^{-1}(R^2) \otimes R^1$, and so there is also a group homomorphism
from $G(H^0)$ to $G(H)$ given by
\[
\eta \mapsto b_\eta := \eta(S^{-1}(R^2))R^1 = \eta^{-1}(R^2)R^1.
\]
(The proof of this is explicit in \cite[Proposition 3]{radford}.) So
in the quasitriangular case, using \equref{invr} we see that
$W_\gamma =\{a_\gamma, b_\gamma\}$. \par(iii) As in \cite{drinfeld},
the map from $G(H^0)$ to $G(H)$ defined by $\eta \mapsto a_\eta
b_{\eta^{-1}}$ is a group homomorphism with image in $G(H) \cap
Z(H)$.  We will see in the next lemma that under this group
homomorphism $\alpha$ maps to 1. For $H$ finite dimensional, this
fact was used by Radford \cite{radford2} in the proof that a
factorizable Hopf algebra is unimodular.\qed
\end{remarks}

For the remainder of this  section, we assume that $(H, R)$ is
quasitriangular. Then $S^4 = \Inn_{uv}   = \Inn_{aa_\alpha} =
\Inn_{a b_\alpha}$ and we show that the elements inducing the inner
automorphism $S^4$ are all equal.

\begin{lemma}\lelabel{factunim}
Let $H$ be a co-Frobenius quasitriangular Hopf algebra with modular
elements $a \in H$ and $\alpha \in H^*$. Then
$a_{\alpha}=b_{\alpha}$.
\end{lemma}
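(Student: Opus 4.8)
The plan is to reduce the equality of the grouplikes $a_\alpha,b_\alpha\in G(H)$ to an identity about the integral $\lambda$, and then to check that identity directly.

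\emph{Reduction.} Recall $S^4=\Inn_{aa_\alpha}=\Inn_{ab_\alpha}$ (from \thref{S4} and \resref{inner}); since two elements inducing the same inner automorphism differ by a central factor, and a product of grouplikes is grouplike, $z:=a_\alpha^{-1}b_\alpha$ lies in $G(H)\cap Z(H)$, and it suffices to prove $z=1$. Fix $0\neq\lambda\in\int_l^{H^*}$; then $\lambda\in\hrat$ and, by \equref{bijection}, the map $h\mapsto h\rightharpoonup\lambda$ is injective. So $z=1$ will follow once we show $a_\alpha\rightharpoonup\lambda=b_\alpha\rightharpoonup\lambda$, i.e.
\begin{equation*}
\alpha(R^1)\,\lambda(hR^2)=\alpha^{-1}(R^2)\,\lambda(hR^1)\qquad\text{for all }h\in H. \tag{$\dagger$}
\end{equation*}

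\emph{Proof of $(\dagger)$.} I would expand both sides using the Frobenius automorphism, $h\rightharpoonup\lambda=\lambda\leftharpoonup\chi(h)$ with $\chi(h)=\alpha(h_2)S^{-2}(h_1)$ from \equref{fx1i}--\equref{fx1ii} (hence $\chi(g)=\alpha(g)S^{-2}(g)$ on grouplikes, and $S^{-2}(a_\eta)=a_\eta$ for $\eta\in G(H^0)$ because $\eta\circ S^2=\eta$), the left-integral identities \equref{a} and \equref{b}, and the quasitriangularity axioms \equref{qt1}--\equref{qt3}. The crucial extra ingredient is almost-cocommutativity \equref{qt4} --- equivalently $R^{-1}=(S\otimes\Id)(R)=(\Id\otimes S^{-1})(R)$ from \equref{invr} --- which lets one interchange the roles of the two legs of $R$ inside $\lambda$. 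Concretely, I would rewrite $\lambda(hR^2)$ on the left of $(\dagger)$ by pushing $R^2$ past $\lambda$ via $\chi$ and splitting $\Delta(R^2)$ with \equref{qt2}, rewrite $\lambda(hR^1)$ on the right by splitting $\Delta(R^1)$ with \equref{qt1}, and then feed \equref{qt4} into \equref{b} to bring both sides to a common expression, the modular elements $a\in H$ and $\alpha\in H^*$ cancelling against one another.

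\emph{Main obstacle.} Since $a_\eta\neq b_\eta$ for generic $\eta\in G(H^0)$, any proof of $a_\alpha=b_\alpha$ must use a feature of $\alpha$ not shared by an arbitrary grouplike of $H^0$, namely its being the modular element of $H$ in $H^*$, as carried by $\chi$ and by \equref{a}, \equref{b}. In particular $(\dagger)$ cannot be obtained by manipulating $R$ alone: inserting $RR^{-1}=1\otimes1$ and like identities only reproduces the trivial $a_\alpha a_\alpha^{-1}=1$. The real difficulty is to apply almost-cocommutativity inside the integral identity \equref{b} at exactly the right moment, so that all factors of $a$ and $\alpha$ cancel, while controlling the powers of $S$ that accumulate --- this bookkeeping is the core of the proof.
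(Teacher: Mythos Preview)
Your reduction to the identity $(\dagger)$ is fine, but what you call the ``Proof of $(\dagger)$'' is not a proof: it is a list of ingredients (the Frobenius automorphism $\chi$, \equref{qt1}--\equref{qt4}, \equref{a}, \equref{b}) with no computation showing how they combine. You even concede in your ``Main obstacle'' paragraph that the bookkeeping is the heart of the matter and then do not do it. If one tries to carry out your sketch---push $R^2$ through $\lambda$ via $\chi$ and \equref{qt2} on the left, push $R^1$ through via $\chi$ and \equref{qt1} on the right---both sides reproduce expressions of the form $\lambda(g\,\cdot\,)$ with $g\in\{a_\alpha,b_\alpha\}$ up to scalars, and one is back where one started. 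The almost-cocommutativity axiom \equref{qt4} by itself does not break this circularity, because (as you note) it holds for every $\eta\in G(H^0)$, not just $\alpha$.

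The missing idea is the Drinfeld element $u=S(R^2)R^1$. The paper does not attack $(\dagger)$ directly; instead it uses \reref{2.1}: since $S^2=\Inn_u$ and $\varepsilon(u)=1$, one has $\chi(u^{-1})u=\alpha(a^{-1})\,1$ and $\alpha(u^{-1})=\alpha(a^{-1})$. Then one computes $\chi(u^{-1})u$ a second time from \equref{fx1ii} and the explicit formula \equref{deltau} for $\Delta(u)$, obtaining $\chi(u^{-1})u=\alpha(u^{-1})\,a_\alpha b_{\alpha^{-1}}$. Comparing gives $a_\alpha b_{\alpha^{-1}}=1$. The point is that $\Delta(u)$ already packages both tensor legs of $R$ together, so applying $\chi$ to $u^{-1}$ produces $a_\alpha$ and $b_{\alpha^{-1}}$ simultaneously in a single three-line calculation---this is exactly the step your sketch lacks.
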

\begin{proof}
From \reref{2.1}, $\chi(u^{-1})u = \alpha(a^{-1})1$ and $\a(u^{-1})=\a(a^{-1})$.
Now, from \equref{fx1i} and \equref{deltau}, we have
\[
\chi(u^{-1})u=\alpha(u^{-1}_2)S^{-2}(u^{-1}_1)u=\alpha(R^1r^2u^{-1})
S^{-2}(R^2r^1u^{-1})u=
\alpha(a^{-1})a_{\alpha}b_{\alpha^{-1}}.
\]
Since $\alpha(a^{-1})\neq 0$ it follows that $a_{\alpha}=b_{\alpha}$,
as needed.
\end{proof}

Now we prove the main result of this section.

%\begin{remark}
%Since $\tilde{R}$ satisfies \equref{qt4}, then
%$R_{21}R\Delta(h)=\Delta(h)R_{21}R$, for all $h\in H$, and for all
%$\eta \in G(H^0)$,
%\begin{eqnarray*}
%a_\eta b_{\eta^{-1}} h
%&=&\eta(R^1r^2)R^2r^1h=\eta(R^1r^2h_2S(h_3))R^2r^1h_1
%=\eta(h_2R^1r^2S(h_3))h_1R^2r^1\\
%&=&\eta(h_2S(h_3))\eta(R^1r^2)h_1R^2r^1
%=h\eta(R^1r^2)R^2r^1
%=ha_\eta b_{\eta^{-1}}.
%\end{eqnarray*}
% Drinfeld (Prop 3.3) shows this for H not nec fd.
%Thus $a_\eta b_{\eta^{-1}}$ is central in $H$. Since $a_{\eta
%\phi}b_{(\eta \phi)^{-1}}=a_\eta a_\phi b_{\phi^{-1}}b_{\eta^{-1}}
%=a_\eta  b_{\eta^{-1}}a_\phi b_{\phi^{-1}}$, then, as in
%\cite{schneider}, we have a group homomorphism from $G(H^0)$ to
%$G(H)\cap Z(H)$ such that, by \leref{factunim}, $\alpha$ maps to 1.
%For $H$ finite dimensional this fact allowed Radford \cite{radford2}
%to show that a factorizable Hopf algebra is unimodular, i.e.,
%$\alpha=\va$. \qed
%\end{remark}

\begin{theorem}\thlabel{2.2}
Let $(H, R)$ be a co-Frobenius quasitriangular Hopf algebra with
modular elements $a\in H$ and $\alpha\in H^*$. Then for $u$ and
$v = S(u)^{-1}$ as above,
\begin{equation}\eqlabel{drcFqt}
uv = vu=ab_{\alpha}=aa_{\alpha }.
\end{equation}
\end{theorem}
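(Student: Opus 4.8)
The plan is to show that $uv$ and $aa_\alpha$ are one and the same grouplike element, granting already that each induces the automorphism $S^4$ by conjugation. First I would rewrite Radford's formula: by \resref{inner}(i)--(ii), the co-inner automorphism $h\mapsto\alpha^{-1}(h_1)h_2\alpha(h_3)$ attached (as in Proposition~\ref{tangent}) to the grouplike $\alpha^{-1}\in G(H^0)$ is exactly $\Inn_{a_\alpha}$, and also $\Inn_{b_\alpha}$; hence, from Theorem~\ref{S4},
$$S^4(h)=a\,(\alpha\rightharpoonup h\leftharpoonup\alpha^{-1})\,a^{-1}=(aa_\alpha)h(aa_\alpha)^{-1}=(ab_\alpha)h(ab_\alpha)^{-1}.$$
Since we also know $S^4=\Inn_{uv}$, that $uv=vu\in G(H)$, and that $aa_\alpha=ab_\alpha$ by \leref{factunim}, the theorem collapses to the single claim $uv=aa_\alpha$ as elements of $G(H)$.

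To obtain this exact equality I would use the generalized Frobenius automorphism $\chi$ together with the bijectivity built into co-Frobenius-ness. From \reref{2.1} applied to $w=u$ and to $w=v$ (each implements $S^2$ and has counit $1$) one reads off $\chi(u)=\alpha(a)u$ and $\chi(v)=\alpha(a)v$, so $\chi(uv)=\alpha(a)^2(uv)$; as $uv$ is grouplike this also equals $\alpha(uv)(uv)$, giving $\alpha(uv)=\alpha(a)^2$. Now by \equref{bijection} the map $h\mapsto\lambda\leftharpoonup h$ is injective on $H$, so it suffices to prove $\lambda((uv)m)=\lambda((aa_\alpha)m)$ for every $m\in H$. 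I would transform the left-hand side using $S^4=\Inn_{uv}$ and the explicit form of $S^4$ in Theorem~\ref{S4}, obtaining $\alpha^{-1}(m_1)\alpha(m_3)\,\lambda(a\,m_2\,a^{-1}(uv))$, and then commute the grouplike $uv$ and the modular element $a$ through $\lambda$ by means of the integral identities \equref{a}, \equref{b} and the explicit descriptions $u=S(R^2)R^1$, $v=S(u)^{-1}$, with \equref{deltau}, \equref{invr} and the quasitriangularity axioms \equref{qt1}--\equref{qt3} supplying the needed moves, and \leref{factunim} entering to swap $a_\alpha$ and $b_\alpha$. An essentially equivalent route is to feed $\Delta(u)$ from \equref{deltau} into $\chi(h)=\alpha(h_2)S^{-2}(h_1)$ evaluated on $(uv)^{-1}$, exactly as in the proof of \leref{factunim} but carried one step further, and read $uv$ off directly in the form $\alpha(R^1)\,a\,R^2=aa_\alpha$.

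The step I expect to be the real obstacle is precisely this passage from ``$uv$ and $aa_\alpha$ induce the same inner automorphism'' to ``$uv=aa_\alpha$''. Knowing only that both are grouplike, both have counit $1$, and both conjugate to $S^4$ is \emph{not} enough: their quotient $z:=(aa_\alpha)^{-1}uv$ is a priori just some central grouplike element annihilated by $\va$ and by $\alpha$, and the formal manipulations with $\chi$, the modular elements and the $S^4$-relation merely recover $\va(z)=\alpha(z)=1$ (and re-prove \leref{factunim}) without forcing $z=1$. Pinning $z$ down to $1$ is exactly where the concrete formulas $u=S(R^2)R^1$, $v=S(u)^{-1}$ for $u,v$ in terms of $R$, rather than the abstract behaviour of $a$ and $\alpha$, must be used --- this being the ``new approach'' announced in the introduction --- and it is the part of the argument that will require the most care.
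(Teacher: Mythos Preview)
Your diagnosis of the difficulty is exactly right, but the proposal does not close the gap it identifies; neither sketched route actually works. The first --- rewriting $\lambda((uv)m)$ via $S^4=\Inn_{uv}$ --- is circular: after substituting $S^4(m)=\alpha^{-1}(m_1)am_2a^{-1}\alpha(m_3)$ you still have the undetermined element $uv$ inside $\lambda$, and every further integral identity you list sees only the central grouplike $z=(aa_\alpha)^{-1}uv$, which, as you yourself say, cannot be pinned down this way. The second route --- applying $\chi$ to $(uv)^{-1}$ ``as in \leref{factunim} but one step further'' --- fails for a structural reason. The computation in \leref{factunim} works precisely because $u$ is \emph{not} grouplike, so that \equref{deltau} makes $\Delta(u^{-1})$ genuinely involve $R_{21}R$, and $\chi(u^{-1})$ then produces the factor $a_\alpha b_{\alpha^{-1}}$. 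But $uv$ \emph{is} grouplike, so $\Delta((uv)^{-1})=(uv)^{-1}\otimes(uv)^{-1}$ and $\chi((uv)^{-1})=\alpha((uv)^{-1})(uv)^{-1}$; no $R$-matrix ever appears, and you recover only the scalar relation $\alpha(uv)=\alpha(a)^2$ that you already had.

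The paper's argument avoids the inner-automorphism viewpoint entirely and works at the level of $u$ rather than $uv$. It evaluates the single $H$-valued expression $\lambda(R^2h_2)R^1h_1$ in two ways: once, via \equref{qt4}, \equref{a} and \equref{qt2}, as $\lambda(hr^2)\,a^{-1}u\,r^1$; and once, via the left-integral property, \equref{qt2} and \equref{invr}, as $\lambda(R^2h)R^1S(u)$, which after the substitution $\lambda(R^2h)=\alpha^{-1}(R^2_2)\lambda(hS^2(R^2_1))$ becomes $(\lambda\leftharpoonup h)(r^2)\,S(u)b_\alpha\,r^1$. Density of $\hrat$ in $H^*$ then lets one choose $h$ with $(\lambda\leftharpoonup h)(r^2)r^1=1$, giving the \emph{elementwise} equality $a^{-1}u=S(u)b_\alpha=v^{-1}b_\alpha$, hence $uv=ab_\alpha$. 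The point you were missing is that one must compute with $u$ itself --- whose coproduct carries the $R$-matrix --- and extract a relation between $u$ and $S(u)$ directly, rather than with the grouplike product $uv$, where that information has already been lost.
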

\begin{proof}
By \leref{factunim} it suffices to show that $uv=ab_\a$.

Let $h\in H$ and $0 \neq \l \in \int_l^{H^*}$. Then, on one hand, we
have
\begin{eqnarray*}
\l(R^2h_2)R^1h_1
&\equal{\equref{qt2}}&
\l(h_1R^2)h_2R^1\\
&=&\l(h_1R^2_1)h_2R^2_2S(R^2_3)R^1\\
&\equal{\equref{a}}&
l(hR^2_1)a^{-1}S(R^2_2)R^1\\
&\equal{\equref{qt2}}&
\l(hr^2)a^{-1}S(R^2)R^1r^1 =\l(hr^2)a^{-1}ur^1.
\end{eqnarray*}
On the other hand, we compute
\begin{eqnarray*}
\l(R^2h_2)R^1h_1&=&\l(R^2_3h_2)R^1S(R^2_1)R^2_2h_1\\
\mbox{$(\l\in \int_l^{H^*})$}
&=&\l(R^2_2h)R^1S(R^2_1)\\
&\equal{\equref{qt2}}&
\l (R^2h)R^1r^1S(r^2)\\
&\equal{\equref{invr}}&
\l (R^2h)R^1S(S(r^2)r^1)=\l (R^2h)R^1S(u).
\end{eqnarray*}
%Thus
%\[
%\l(hr^2)a^{-1}ur^1=\l(R^2h)R^1S(u).
%\]
Now, by \equref{fx1i} and \equref{fx1ii} we have
$\l(hh')=\alpha(h'_2)\l (S^{-2}(h'_1)h)$, for all $h, h'\in H$ or,
equivalently, $\l(h'h)=\alpha ^{-1}(h'_2)\l(hS^2(h'_1))$, for all
$h, h'\in H$, and therefore
\begin{eqnarray*}
(\l\leftharpoonup h)(r^2)a^{-1}ur^1 &=&\l(R^2h)R^1S(u)\\
&=& \alpha^{-1}(R^2_2)\l(hS^2(R^2_1))S(uS^{-1}(R^1))\\
&=&\alpha^{-1}(S^2(R^2_2))\l(hS^2(R^2_1))S(u)S^2(R^1)\\
&\equal{\equref{invr}}&
\alpha^{-1}(R^2_2)\l(hR^2_1)S(u)R^1\\
&\equal{\equref{qt2}}&
\alpha^{-1}(R^2)\l(hr^2)S(u)R^1r^1\\
&=&(\l\leftharpoonup h)(r^2)S(u)b_\alpha  r^1,
\end{eqnarray*}
for all $h\in H$. Since $\hrat$ is dense in $H^*$ in the finite
topology (for example see \cite{dnr}), we may choose $h\in H$ such
that $r^1 (\lambda\leftharpoonup h)(r^2) = r^1\varepsilon(r^2) = 1$,
and then we have that $a^{-1}u = S(u)b_\alpha =v^{-1}b_\alpha$.
Finally, since $v^{-1}$ commutes with grouplikes, we obtain that
$uv=ab_{\alpha}$, so the proof is complete.
\end{proof}

As in the finite dimensional case, \thref{2.2} implies the
following.

\begin{corollary}\colabel{2.5}
Let $(H, R)$ be a co-Frobenius quasitriangular Hopf algebra.
\begin{itemize}
\item[(i)] If $\alpha=\va$ then $S(u)^{-1}u = vu =a$, so $S(u)^{-1}u$ does not depend on $R$.
\item[(ii)] $S(u)=u$ if and only if $a_{\alpha}=a^{-1}$.
\end{itemize}
\end{corollary}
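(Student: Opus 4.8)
The plan is to derive Corollary \ref{co:2.5} as a direct consequence of \thref{2.2}, using only the established identities $uv = vu = aa_\alpha = ab_\alpha$, the definitions $a_\eta = \eta(R^1)R^2$, $b_\eta = \eta^{-1}(R^2)R^1$, $v = S(u)^{-1}$, and the fact from \resref{inner}(ii) that $\eta \mapsto a_\eta$ is a group homomorphism $G(H^0)\to G(H)$ (so in particular $a_\varepsilon = 1$, since $\varepsilon$ is the identity of $G(H^0)$ and $\varepsilon(R^1)R^2 = 1$ by \equref{qt3}).

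For part (i), I would simply specialize \equref{drcFqt} to $\alpha = \varepsilon$. Then $a_\alpha = a_\varepsilon = \varepsilon(R^1)R^2 = 1$, so $vu = S(u)^{-1}u = ab_\alpha = aa_\alpha = a\cdot 1 = a$. Since the right-hand side $a$ is the modular element of $H$, which is defined via the integral $\lambda$ and makes no reference to $R$, the element $S(u)^{-1}u$ is independent of the choice of $R$; this is the content of the assertion. (One could alternatively note $b_\varepsilon = \varepsilon^{-1}(R^2)R^1 = \varepsilon(R^2)R^1 = 1$ by \equref{qt3} directly, without invoking the homomorphism property.)

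For part (ii), I would argue as follows. By \thref{2.2}, $uv = aa_\alpha$; equivalently $u\, S(u)^{-1} = a\,a_\alpha$. Now $S(u) = u$ holds if and only if $u\,S(u)^{-1} = u u^{-1} = 1$, i.e.\ if and only if $a a_\alpha = 1$, which says $a_\alpha = a^{-1}$. For the forward direction one multiplies $u S(u)^{-1} = a a_\alpha$ on appropriate sides; for the converse, from $a a_\alpha = 1$ and $u S(u)^{-1} = a a_\alpha = 1$ one gets $u = S(u)$ after canceling. The only subtlety is that $u$ need not be central, so one should check that "$S(u) = u \iff u S(u)^{-1} = 1$" is genuinely an equivalence of the stated form and that the cancellation is legitimate in the group of units; this is immediate since $u \in \mathcal{U}(H)$ and $aa_\alpha \in G(H)$ is invertible.

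I do not expect any real obstacle here: the corollary is a formal bookkeeping consequence of the theorem, and the main points to be careful about are (a) correctly evaluating $a_\varepsilon$ and $b_\varepsilon$ using the normalization axiom \equref{qt3}, and (b) keeping track of left/right placement when translating between the statements $S(u) = u$, $uv = 1$, and $a_\alpha = a^{-1}$, given that $u$ is not assumed central. Both are routine.
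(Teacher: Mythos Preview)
Your proposal is correct and matches the paper's approach: the paper simply asserts that the corollary follows from \thref{2.2} without further argument, and your specialization $a_\varepsilon=1$ for (i) and the equivalence $S(u)=u\iff uv=1\iff aa_\alpha=1\iff a_\alpha=a^{-1}$ for (ii) are exactly the routine verifications intended.
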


   Every subHopf algebra of a co-Frobenius Hopf
algebra is co-Frobenius \cite[5.3.3]{dnr}. If   $ 0 \neq \lambda \in
\int_l^{H^*}$ and $\lambda(x) \neq 0$ for some $x \in L$, a subHopf
algebra of $H$,  then $\int_l^{H^*} = \int_l^{L^*} = k\lambda$, and
  the modular elements are $a_L = a_H = \lambda(x_1)x_2$, and $\alpha_L =
\alpha_H$ restricted to $L$. However in general the spaces of
integrals do not coincide (in fact, Ker$(\l)$ is a subcoalgebra of
codimension 1) and the modular elements   differ. If $L$ is the
minimal quasitriangular subHopf algebra of $H$ (see
\cite{radfordmin}), then we have the following.

\begin{corollary}  For $(L,R) \subseteq (H,R)$ minimal
quasitriangular, then $a_L = a_H$ if and only if $\alpha_L =
\alpha_H$ restricted to $L$ if and only if $\chi_L = \chi_H$ where
$\chi$ is the generalized Nakayama automorphism defined in
\equref{fx1i}.
\end{corollary}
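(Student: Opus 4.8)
The plan is to prove a chain of three equivalences, exploiting the general facts recalled just before the statement. Since $(L,R)\subseteq(H,R)$ with $L$ minimal quasitriangular, $L$ is generated as an algebra by the components $R^1,R^2$ of $R$, and both Hopf algebras carry the \emph{same} $R$; in particular $u$, $v$ and all the elements $a_\eta$, $b_\eta$ built from $R$ lie in $L$ and are literally the same whether computed in $L$ or in $H$. The preceding paragraph tells us that, because there is a single nonzero left integral $\lambda$ up to scalar and $L$ is a subHopf algebra, either $\lambda|_L=0$, or $\lambda|_L\neq0$ and then $\int_l^{L^*}=\int_l^{H^*}=k\lambda$, $a_L=a_H$, $\alpha_L=\alpha_H|_L$, and consequently $\chi_L=\chi_H|_L$ via \equref{fx1ii}. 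So I would first dispose of, or rather isolate, the genuine content: the implications $a_L=a_H\Rightarrow \alpha_L=\alpha_H|_L \Rightarrow \chi_L=\chi_H|_L$ are either immediate from \equref{fx1ii} or follow from the dichotomy above, and $\chi_L=\chi_H|_L \Rightarrow \alpha_L=\alpha_H|_L$ is immediate by applying $\varepsilon$ (recall $\alpha=\varepsilon\circ\chi$). The crux is therefore the single implication $\alpha_L=\alpha_H|_L \Rightarrow a_L=a_H$.

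For that implication I would use \thref{2.2} applied inside \emph{both} $L$ and $H$. In $L$ we get $uv=a_L\, (b_\alpha)_L$, and in $H$ we get $uv=a_H\,(b_\alpha)_H$. But $uv$ is the same element (it depends only on $R$), and $b_{\alpha_L}=\alpha_L^{-1}(R^2)R^1$ while $b_{\alpha_H}=\alpha_H^{-1}(R^2)R^1$; under the hypothesis $\alpha_L=\alpha_H|_L$ and using that $R\in L\otimes L$, these two grouplike elements coincide. Cancelling the common factor $uv=$ (that common grouplike) forces $a_L=a_H$. This is the key step and also the one that carries the real idea: the modular element of $L$ in $L^*$ being the restriction of that of $H$ is exactly the condition that makes the Drinfeld--Radford factorization of $uv$ agree in the two algebras, and since $uv$ is intrinsic to $R$, the modular elements in $L$ and $H$ must then agree as well.

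I would organize the write-up as: (1) observe $u,v,a_\eta,b_\eta\in L$ and are computed identically in $L$ and $H$; (2) note $\chi_L=\chi_H|_L \Leftrightarrow \alpha_L=\alpha_H|_L$ trivially, one direction by $\va\circ$, the other by \equref{fx1ii}; (3) prove $a_L=a_H \Rightarrow \alpha_L=\alpha_H|_L$ (here use that $a_L=a_H$ together with $a_H$ grouplike and \equref{a} pins down $\lambda$ on $L$, hence $\lambda|_L\neq0$, hence the dichotomy from the paragraph gives everything); (4) prove $\alpha_L=\alpha_H|_L \Rightarrow a_L=a_H$ by the \thref{2.2} comparison above. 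The main obstacle I anticipate is step (3): one must be a little careful that $a_L=a_H$ really does imply $\lambda|_L\neq0$ — this uses that $a_L$ is defined via $a_L=\lambda(x_1)x_2$ for some $x\in L$ with $\lambda(x)\neq0$, so if $a_L$ exists at all as a nonzero grouplike we are automatically in the case $\lambda|_L\neq0$; phrased correctly the subtlety evaporates, but it should be spelled out rather than waved through. Everything else is bookkeeping with the definitions of $a_\eta$, $b_\eta$ and the formula \equref{drcFqt}.
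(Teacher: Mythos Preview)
Your step (4), the implication $\alpha_L=\alpha_H|_L \Rightarrow a_L=a_H$, is correct and is exactly the paper's argument: both sides of \equref{drcFqt} are intrinsic to $R$, so equating the factorizations in $L$ and in $H$ and cancelling the common $b_\alpha$ (or $a_\alpha$) gives $a_L=a_H$. Likewise your step (2) is fine.

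The gap is in step (3). Your justification that $a_L=a_H$ forces $\lambda|_L\neq 0$ rests on the sentence ``$a_L$ is defined via $a_L=\lambda(x_1)x_2$ for some $x\in L$ with $\lambda(x)\neq 0$.'' That is not how $a_L$ is defined. The modular element $a_L$ is defined using a left integral $\lambda_L\in\int_l^{L^*}$ for $L$, which exists because $L$ is co-Frobenius, and in general has nothing to do with the restriction of $\lambda$ to $L$. The paragraph preceding the corollary says only that \emph{if} $\lambda|_L\neq 0$ \emph{then} $a_L=a_H$ and $\alpha_L=\alpha_H|_L$; it gives no converse, and the dichotomy you invoke does not run in the direction you need. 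So as written, step (3) is circular.

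The paper handles $a_L=a_H \Rightarrow \alpha_L=\alpha_H|_L$ by the same \thref{2.2} comparison you used for step (4), but read in the other direction and combined with the minimality of $L$. From $uv=a_L\,a_{\alpha_L}=a_H\,a_{\alpha_H}$ and $uv=a_L\,b_{\alpha_L}=a_H\,b_{\alpha_H}$ one gets $a_{\alpha_L}=a_{\alpha_H}$ and $b_{\alpha_L}=b_{\alpha_H}$. Now write $R=\sum_{i=1}^m u_i\otimes v_i$ with the $u_i$ linearly independent and the $v_i$ linearly independent (this is always possible, and $L$ is generated as an algebra by the $u_i,v_i$). Then $a_{\alpha_L}=a_{\alpha_H}$ reads $\sum_i \alpha_L(u_i)v_i=\sum_i \alpha_H(u_i)v_i$, forcing $\alpha_L(u_i)=\alpha_H(u_i)$ for all $i$; similarly $b_{\alpha_L}=b_{\alpha_H}$ forces $\alpha_L(v_i)=\alpha_H(v_i)$. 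Since $\alpha_L,\alpha_H$ are algebra maps and the $u_i,v_i$ generate $L$, this gives $\alpha_L=\alpha_H|_L$. This linear-independence step is precisely where minimality of $L$ is used, and it is the missing idea in your argument.
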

\begin{proof} By  \thref{2.2}, $a_L = a_H$ if and only if
$a_{\alpha_L} = a_{\alpha_H}$.  Write $R = \sum_{i=1}^m u_i \otimes
v_i$ with the sets of $u_i$ and of $v_i$ linearly independent (cf.
\cite{radfordmin}). The minimal quasitriangular Hopf algebra $L$ is
generated by the $u_i$ and $v_i$. Then $a_{\alpha_L} = a_{\alpha_H}$
if and only if $\alpha_L$ and $\alpha_H$ agree on the $v_i$ and
$b_{\alpha_L} = b_{\alpha_H}$ if and only if $\alpha_L$ and
$\alpha_H$ agree on the $u_i$.  The last equivalence follows from
the definition of the generalized Nakayama automorphism.
\end{proof}

Note that in the above corollary it is important that $(L,R)$ be
minimal quasitriangular.  Let char($k$) be different from 2, and let
$H = H_4$, Sweedler's 4-dimensional Hopf algebra generated by the
grouplike $g$ and the $(1,g)$-primitive $x$. It is well known that
$(H,R)$ is quasitriangular with $R = c+ \xi b$ where $c =
\frac{1}{2}(1 \otimes 1 + 1 \otimes g + g \otimes 1 - g \otimes g)$
and $b =\frac{1}{2}(x \otimes x - x \otimes gx + gx \otimes x + gx
\otimes gx)$. If $\xi \neq 0$, then   $a_H=g$; if $\xi = 0$, $a_H =
1$.  If $\xi \neq 0$, then $\alpha_H(g^ix^j) = \delta_{j,0} (-1)^i$;
if $\xi =0$, then $\alpha_H = \varepsilon$. Now the semisimple
cosemisimple subHopf algebra $L = k[\langle g \rangle]$   has
trivial modular elements and so these agree with those of $H$ if
$\xi = 0$ so that $L$ is minimal quasitriangular, but otherwise they
are different.

%%%%%%%%%%%%%%%%%%%%%%%%%%%%%%%%%%%%%%%%%%%%%%%%%%%%%%%%%%%%%%%%%%%%%%%%%%%%%%%
\section{$S^4$ when $(H, \sigma)$ is a co-Frobenius coquasitriangular Hopf algebra}\selabel{3}
%%%%%%%%%%%%%%%%%%%%%%%%%%%%%%%%%%%%%%%%%%%%%%%%%%%%%%%%%%%%%%%%%%%%%%%%%%%%%%%
\setcounter{equation}{0}

Recall that  for $H$ a Hopf algebra and $\sigma: H \otimes
H\rightarrow k$, then $(H, \sigma)$ is called coquasitriangular if
for all $h,l,m \in H$, the map $\sigma$ satisfies:
\begin{eqnarray}
&&\sigma (hl, m)=\sigma(h, m_1)\sigma(l, m_2),\eqlabel{cqt1}\\
&&\sigma(h, lm)=\sigma(h_1, m)\sigma(h_2, l),\eqlabel{cqt2}\\
&&\sigma(h, 1)=\sigma(1, h)=\va(h),\eqlabel{cqt3}\\
&&l_1h_1\sigma(h_2, l_2)=\sigma(h_1, l_1)h_2l_2. \eqlabel{cqt4}
\end{eqnarray}
 These conditions imply that
$\sigma$ is convolution invertible with inverse
$\sigma\circ (S \otimes \Id) = \sigma \circ (\Id \otimes S^{-1})$, and so
$\sigma =\sigma \circ (S\ot S)$.
If only \equref{cqt4} holds and $\sigma$ is invertible, then $(H, \sigma)$ is
called almost commutative.

Let $(H, \sigma)$ be coquasitriangular. Let $u\in H^*$ be defined by
$u(h):=\sigma(h_2, S(h_1))$ with convolution inverse
 $u^{-1}=\sigma(S^2(h_2),h_1)$. Similarly, define $v(h):=(u\circ
S)(h)=\sigma(h_1, S(h_2))$ with convolution inverse
$v^{-1}(h)=\sigma(S^2(h_1), h_2)$. Then by the argument in
\cite{doi}, $S^2 = {\rm co-Inn}_u = {\rm co-Inn}_{v^{-1}} $, that
is,
$$
S^2(h) = u(h_1)h_2 u^{-1}(h_3) = v^{-1}(h_1)h_2 v(h_3).
$$
In particular, $u$ and $v^{-1}$ commute.
\begin{remark}
Note that if $S^2 ={\rm co-Inn}_\omega$ with $\omega(1)=1$, then
$\lambda \circ S^2 = \omega*\lambda*\omega^{-1} =
\omega^{-1}(a^{-1})\l $. But as in \reref{2.1}, $\lambda \circ S^2 =
\alpha(a^{-1})\lambda$ and so $\omega^{-1}(a^{-1}) =
\alpha(a^{-1})$. \qed
\end{remark}

The analogue of \resref{inner} in the coquasitriangular setting
follow.

\begin{remarks}
(i) If $(H,\sigma)$ is almost commutative, then an inner
automorphism induced by a grouplike element is co-inner. For, let $g
\in G(H)$ and let $\Inn_g \in {\rm Aut}(H)$ be the inner
automorphism of $H$ induced by $g$. Then it is easy to check using
\equref{cqt4} that $\Inn_g ={\rm co-Inn}_\omega$ where $\omega \in
\cal{W}_g=\{\sigma( -, g), \sigma^{-1}(g, -), \sigma(g^{-1}, -),
\sigma^{-1}(-, g^{-1}) \}$.

\par (ii) Now let $(H, \sigma)$ be coquasitriangular.
For $g \in G(H)$,  define grouplike elements $\alpha_g$ and
$\beta_g$ in $H^0$ by $\alpha_g = \sigma(-, g^{-1})$ and $\beta_g
=\sigma(g ,-)$. It is easy to check that $\alpha_g$ and $\beta_g$
lie in $G(H^0)$ and that the maps from $G(H)$ to $G(H^0)$, $g\mapsto
\alpha_g$ and $g\mapsto \beta_g$ are well defined group
homomorphisms. Furthermore, here ${\cal W}_g = \{\alpha_g, \beta_g
\}$. \qed
\end{remarks}

Now we have the main result of this section.

\begin{theorem}\thlabel{mainsection3}
Let $(H, \sigma)$ be a co-Frobenius coquasitriangular Hopf algebra
with modular elements  $a \in H$ and $\alpha \in H^*$ and with $u, v$
as defined above. Then
\[
u^{-1}*v =v* u^{-1} = \alpha * \beta_a = \alpha * \alpha_{a}.
\]
\end{theorem}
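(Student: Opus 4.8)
The plan is to follow the architecture of the proof of \thref{2.2}, keeping the authors' warning in mind: \equref{cqt1}--\equref{cqt4} are not the literal duals of \equref{qt1}--\equref{qt3}, and, more seriously, $\sigma$ is a functional on $H\ot H$ rather than a finite element of $H\ot H$, so the ``choose $h$ with $r^1(\l\lh h)(r^2)=1$'' device closing the proof of \thref{2.2} has no literal analogue and must be replaced. I would begin with two reductions: since $u$ and $v^{-1}$ commute in $H^*$, so do $u^{-1}$ and $v$, whence $u^{-1}*v=v*u^{-1}$; and once $\alpha_a=\beta_a$ is known it suffices to prove $u^{-1}*v=\alpha*\beta_a$.

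I would establish $\alpha_a=\beta_a$ first, as the coquasitriangular counterpart of \leref{factunim}. Here $u(1)=\sigma(1,S(1))=\sigma(1,1)=1$, so the remark preceding the theorem gives $u^{-1}(a^{-1})=\alpha(a^{-1})$, a nonzero scalar since $\alpha$ is an algebra map and $a$ is grouplike. In place of the formula for $\Delta(u^{-1})$ used in \leref{factunim} I would expand $u^{-1}$ on a suitable product using \equref{cqt1}, \equref{cqt2} and the fact that $S^{-2}={\rm co\text{-}Inn}_{u^{-1}}$, then compare with the generalized Nakayama automorphism via $\chi(h)=\alpha(h_2)S^{-2}(h_1)$; the grouplikes $\beta_a$ and $\alpha_{a^{-1}}$ (up to inverses) appear, the scalar $\alpha(a^{-1})$ cancels, and one is left with $\beta_a*\alpha_{a^{-1}}=\va$, i.e.\ $\alpha_a=\beta_a$ because $g\mapsto\alpha_g$ is a group homomorphism.

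For the identification $u^{-1}*v=\alpha*\beta_a$ I would evaluate one well-chosen $\hrat$-valued expression in two ways. A natural candidate is $l\mapsto\lambda\big(S^2(h)\,l\big)=(\lambda\lh S^2(h))$, which on the one hand equals $u(h_1)u^{-1}(h_3)(\lambda\lh h_2)$ via $S^2={\rm co\text{-}Inn}_u$, and on the other hand, using $\lambda\circ S^2=\alpha(a^{-1})\lambda$ (from \equref{a} and \equref{fx1ii}) together with $S^{-2}={\rm co\text{-}Inn}_{u^{-1}}$, equals $\alpha(a^{-1})\,u^{-1}(l_1)u(l_3)\,\lambda(h\,l_2)$. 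Starting from the resulting identity $u(h_1)u^{-1}(h_3)\lambda(h_2l)=\alpha(a^{-1})u^{-1}(l_1)u(l_3)\lambda(hl_2)$, the coquasitriangular data enter by unravelling $u(x)=\sigma(x_2,S(x_1))$ and $v(x)=\sigma(x_1,S(x_2))$ and using \equref{cqt4} to slide the resulting $\sigma$-factors past the comultiplications, while the modular element $a$, hence $\beta_a=\sigma(a,-)$, is brought in through the left-integral identity $\lambda(h_1)h_2=\lambda(h)a^{-1}$ of \equref{a}; the identity $\lambda(h'h)=\alpha^{-1}(h'_2)\lambda(hS^2(h'_1))$ from \equref{fx1i}--\equref{fx1ii} is what forces $\alpha$ to appear. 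The goal of this bookkeeping is to reach, for a fixed nonzero integral $\Gamma\in\hrat$, an identity of the form $\Gamma\lh\big(h\lh(u^{-1}*v)\big)=\Gamma\lh\big(h\lh(\alpha*\beta_a)\big)$ valid for all $h\in H$.

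Given such an identity the descent is short and, unlike in \thref{2.2}, does not use density: since $h\mapsto\Gamma\lh h$ is a bijection $H\to\hrat$ by \equref{bijection}, the displayed equality forces $h\lh(u^{-1}*v)=h\lh(\alpha*\beta_a)$ in $H$ for every $h$, and applying $\va$ (with $\va(h\lh m^*)=m^*(h)$) yields $(u^{-1}*v)(h)=(\alpha*\beta_a)(h)$ for all $h$, hence $u^{-1}*v=\alpha*\beta_a$; combined with $\alpha_a=\beta_a$ this gives $u^{-1}*v=v*u^{-1}=\alpha*\beta_a=\alpha*\alpha_a$. The main obstacle I foresee is the middle stage: arranging the two evaluations of $\lambda(S^2(h)l)$ so that \emph{all} of the $\sigma$-dependence is packaged inside the single convolution factors $u^{-1}*v$ and $\alpha*\beta_a$. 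This is precisely the point where the quasitriangular proof exploits that $R=\sum p_i\ot q_i$ is a finite sum with linearly independent $q_i$, so individual legs can be separated off; no such separation is available for $\sigma$, and the computation must be organised so that it is never needed.
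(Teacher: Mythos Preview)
Your endgame is fine and matches the paper's: once two expressions are shown to equal $\lambda\lh(\cdot)$, bijectivity of \equref{bijection} followed by $\varepsilon$ extracts the desired equality in $H^*$. The gap is your starting point. The identity
\[
u(h_1)\,u^{-1}(h_3)\,\lambda(h_2l)\;=\;\alpha(a^{-1})\,u^{-1}(l_1)\,u(l_3)\,\lambda(hl_2)
\]
is nothing more than $\lambda(S^2(h)\,l)=\alpha(a^{-1})\,\lambda(h\,S^{-2}(l))$, i.e.\ a restatement of $\lambda\circ S^2=\alpha(a^{-1})\lambda$; it uses only that $S^2={\rm co\text{-}Inn}_u$ and would hold verbatim with $u$ replaced by $v^{-1}$ or by any $\omega$ with $\omega(1)=1$ implementing $S^2$. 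It therefore cannot isolate the particular element $u^{-1}*v$, and it does not see $\beta_a=\sigma(a,-)$ at all. Your plan to ``unravel $u(x)=\sigma(x_2,S(x_1))$'' afterwards is where the entire content of the theorem would have to live, and you give no mechanism for it; the obstacle you flag in your last paragraph is not a technicality but the whole argument.

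The paper instead begins from an expression in which $\sigma$ and $a$ already appear together, namely
\[
e\;=\;\lambda(hl_1)\,\sigma\bigl(l_3,\,a^{-1}S(l_2)\bigr).
\]
One application of \equref{cqt2} gives $e=\bigl((\lambda\lh h)*u*\alpha_a\bigr)(l)$; trading $a^{-1}$ for $\lambda(h_1l_1)h_2$ via \equref{a} and then invoking \equref{cqt4} gives $e=\bigl(v*(h\rh\lambda)\bigr)(l)$. Both sides are rewritten as $\lambda\lh(\cdot)$ through the elementary identities $(\lambda\lh h)*p=\lambda\lh p(S^{-1}(h_2)a^{-1})h_1$ and $p*(h\rh\lambda)=\lambda\lh p(S^{-1}(h_1))\chi(h_2)$, after which the descent you describe yields $u*\alpha=v*\beta_{a^{-1}}$, hence $u^{-1}*v=\alpha*\beta_a$. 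The remaining equality $\alpha*\alpha_a=\alpha*\beta_a$ is obtained not as a preliminary lemma but as a \emph{corollary}: one reruns the just-proved identity for the second coquasitriangular structure $\tilde\sigma=\sigma^{-1}\circ{\rm tw}$, for which $\tilde u=v^{-1}$, $\tilde v=u^{-1}$ and $\widetilde{\beta_a}=\alpha_a$, to get $v*u^{-1}=\alpha*\alpha_a$, and then uses $u^{-1}*v=v*u^{-1}$. Your proposed direct dualisation of \leref{factunim} would require a workable analogue of \equref{deltau} for $u^{-1}$ evaluated on products, which you have not supplied.
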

\begin{proof}
For   $h, l\in H$, we compute $e =\lambda(hl_1)\sigma(l_3,
a^{-1}S(l_2))$ in two different ways.  First, we compute
\[
e \equal{\equref{cqt2}} \lambda(hl_1)\sigma(l_3, S(l_2))\sigma(l_4,
a^{-1}) =((\lambda \leftharpoonup h)*u*\alpha_{a})(l).
\]
Next, we see that
\begin{eqnarray*}
  e & \equal{\equref{a}}& \sigma(l_2, \lambda(h_1l_1)h_2)
\equal{\equref{cqt4}}
\sigma(l_1,h_1)\lambda(l_2h_2)\\
& = &\sigma(l_1 , \lambda(l_4h_2)S(l_2)l_3h_1) =\sigma(l_1,
S(l_2))\lambda(l_3h) =(v*(h \rightharpoonup \lambda))(l).
\end{eqnarray*}
But it is easy to prove directly (or see the second equation in
\equref{b}), that  for all $p\in H^*$, we have  $(\l\lh h)*p=\l\lh
p(S^{-1}(h_2)a^{-1})h_1$, and therefore
\[
(\lambda \leftharpoonup h) * u * \alpha_{a } = \lambda
\leftharpoonup (u*\alpha_a)(S^{-1}(h_2)a^{-1})h_1.
\]
Similarly, since $p*(h\rh \l)=p(S^{-1}(h_1))h_2\rh \l$, for any
$p\in H^*$, we get that
\[
v*(h\rh \l)=(u\circ S)*(h\rh \l)=u(h_1)h_2\rh \l
=\l\lh u(h_1)S^{-2}(h_2)\a(h_3).
\]
Thus $(u * \alpha_a)(S^{-1}(h_2)a^{-1})h_1=
u(h_1)S^{-2}(h_2)\a(h_3)$. The fact that $u*\alpha =
v*\beta_{a^{-1}}$ will follow by applying $\varepsilon$ to this
equality, and using the facts that $v=u\circ S=u\circ S^{-1}$, $a\in
G(H)$ and $\a_a\in G(H^0)$.  For all $h\in H$,
\begin{eqnarray*}
(u * \alpha)(h)&=&(u * \alpha_{a })(S^{-1}(ah))
%\hspace{2mm}=\hspace{2mm}
%u(S^{-1}(ah_2))\alpha_{a }(S^{-1}(ah_1))
\\
&=& \alpha_{a^{-1}}(ah_1)v(ah_2)
\hspace{2mm}=\hspace{2mm}
\sigma(ah_1, a)\sigma(ah_2, S(ah_3))\\
&\equal{\equref{cqt2}}&
\sigma(ah_1, S(ah_2)a)
\hspace{2mm}=\hspace{2mm}
\sigma (ah_1, S(h_2))\\
&\equal{\equref{cqt1}}& \sigma(a, S(h_3))\sigma (h_1, S(h_2))
\hspace{2mm}=\hspace{2mm} (v* \beta_{a^{-1}})(h),
\end{eqnarray*}
and thus $u^{-1} * v = \alpha * \beta_a$.
\par Recall that $(H, \tilde{\sigma})$ is also coquasitriangular where
$\tilde{\sigma} = \sigma^{-1} \circ{\rm tw}$, for ${\rm tw}$  the
twist map. It is easily checked that    $\tilde{u} = v^{-1}$,
$\tilde{v} = u^{-1}$, $\widetilde{\alpha_a} = \beta_{a}$, and
$\widetilde{\beta_a} = \alpha_{a}$. Then $\tilde{u}^{-1} * \tilde{v}
= \alpha*\widetilde{\beta_a}$ yields $v * u^{-1} = \alpha
* \alpha_a$, and the proof is complete.
\end{proof}

Dual to \leref{factunim} and \coref{2.5} we have the following.

\begin{corollary}
Let $(H, \sigma)$ be a co-Frobenius coquasitriangular Hopf algebra and $a\in G(H)$
and $\a\in G(H^0)$ as above. Then:
\begin{itemize}
\item[(i)] $\a_a=\b_a$.
\item[(ii)] If $a=1$ then $u^{-1}*v=\a$, so $u^{-1}*(u\circ S)$
does not depend on $\sigma$.
\item[(iii)] $u\circ S=u$ if and only if $\a_a=\a^{-1}$.
\end{itemize}
\end{corollary}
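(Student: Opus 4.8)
The plan is to obtain all three statements as formal consequences of \thref{mainsection3} together with the duality already recorded in its proof, avoiding any fresh computation with $\sigma$. For part (i), \thref{mainsection3} gives $u^{-1}*v = \alpha*\beta_a = \alpha*\alpha_a$ inside $H^*$. Since $H^*$ is an algebra under convolution and $\alpha$ is convolution invertible (being a grouplike of $H^0$), cancelling $\alpha$ on the left of $\alpha*\beta_a = \alpha*\alpha_a$ yields $\beta_a = \alpha_a$ immediately. This is the exact dual of the step in \leref{factunim} where one cancels the nonzero scalar $\alpha(a^{-1})$; here the role of that scalar is played by the invertibility of $\alpha$ in $H^*$.

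For part (ii), set $a=1$. Then $\alpha_1 = \sigma(-,1^{-1}) = \sigma(-,1) = \varepsilon$ by \equref{cqt3}, so $\alpha*\alpha_a = \alpha*\varepsilon = \alpha$, and \thref{mainsection3} reads $u^{-1}*v = \alpha$. Since $v = u\circ S$ by definition, this says $u^{-1}*(u\circ S) = \alpha$, and the right-hand side depends only on the modular data of $H$, not on $\sigma$; that is the asserted independence. (One should note that when $a=1$ the Hopf algebra is unimodular on the $H$-side, which is automatically forced in nice cases but need not be assumed here.)

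For part (iii), I would argue as follows. Suppose $u\circ S = u$. Then $v = u\circ S = u$, so $u^{-1}*v = u^{-1}*u = \varepsilon$, and comparing with \thref{mainsection3} gives $\varepsilon = \alpha*\alpha_a$, hence $\alpha_a = \alpha^{-1}$ after multiplying by $\alpha^{-1}$ in $H^*$. Conversely, if $\alpha_a = \alpha^{-1}$ then $\alpha*\alpha_a = \varepsilon$, so $u^{-1}*v = \varepsilon$, i.e.\ $v = u$, which is exactly $u\circ S = u$. Both directions use only cancellation in the convolution algebra $H^*$, so there is essentially nothing to check beyond invoking the theorem.

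I expect no real obstacle here: the corollary is a packaging of \thref{mainsection3} exactly as \coref{2.5} packages \thref{2.2}, and the only points requiring a word of care are the identification $\alpha_1 = \varepsilon$ via \equref{cqt3} in part (ii) and the observation, used throughout, that grouplikes of $H^0$ (in particular $\alpha$) are convolution invertible so that cancellation in $H^*$ is legitimate. If one wanted to be fully parallel to the quasitriangular side, part (i) could alternatively be deduced from part (iii) applied to the twisted structure $(H,\tilde\sigma)$, but the direct cancellation argument above is cleaner.
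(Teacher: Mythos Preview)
Your proposal is correct and is precisely the argument the paper intends: the corollary is stated immediately after \thref{mainsection3} as ``dual to \leref{factunim} and \coref{2.5}'' with no proof given, and each part is obtained, just as you do, by cancelling the invertible element $\alpha$ (or $u$) in the convolution algebra $H^*$ from the identity $u^{-1}*v=\alpha*\beta_a=\alpha*\alpha_a$. The only cosmetic remark is that your parenthetical about unimodularity in part~(ii) is unnecessary and slightly misleading---nothing about integrals in $H$ is at stake---so you may simply drop it.
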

\par Examples of quasitriangular co-Frobenius Hopf algebras include
the cosemisimple Hopf algebra $SL_q(2)$. See \cite[Example
2.12]{bbt} for the computation of the modular elements; the
quasitriangular structure comes from the FRT construction and can be
found, for example, in \cite{kassel}. Another example is the
following.

\begin{example}\label{ore} Let $H$ be the   Hopf algebra generated as an algebra by the
grouplike element $g$ and the $(1,g)$-primitive element $x$ with $gx
= - xg$, $x^2=0$. (See   \cite[Section 4]{bdgn} for details.) Then
$H$ is an infinite dimensional co-Frobenius Hopf algebra with $0
\neq \lambda \in \int^{H^*}_l$ defined by $\lambda(g^ix^j) =
\delta_{i, -1}\delta_{j,1}$. The modular elements are $a=g^{-1}$ and
$\alpha$ defined by $\alpha(g^ix^j) = \delta_{j,0}(-1)^i$. Define
$\sigma: H \otimes H \rightarrow k$ by $\sigma(g^ix^j, g^tx^s) =
\delta_{s,0}\delta_{j,0}(-1)^{it}$. It is straightforward to check
that $(H,\sigma)$ is coquasitriangular. Here $u = v =
u^{-1}=v^{-1}$, $\beta_a(g^ix^j) = \delta_{j,0}(-1)^{-i} =
\delta_{j,0}(-1)^{i} =\alpha_a(g^ix^j)$.
\end{example}

\par Of course, if $(H,R)$ is quasitriangular, and $L$ is a Hopf
algebra contained in $H^*$, then $(L, \sigma)$ is coquasitriangular
where $\sigma(\alpha , \beta) = \alpha(R^1)\beta(R^2)$. An
interesting approach to the converse is via multiplier Hopf
algebras.

\par The results of Drinfeld discussed in this paper have
been extended in \cite{dvdw} to   discrete quasitriangular
multiplier Hopf algebras as defined in \cite{zhang}. A discrete
multiplier Hopf algebra $A$ is one with a cointegral, so if $A$ has
a 1, $A$ is a finite dimensional Hopf algebra.  For $H$  a
co-Frobenius Hopf algebra, then $H^{* {\rm rat}}$ is a regular
discrete multiplier Hopf algebra with integrals, giving interesting
infinite dimensional examples in this context.  The basic theory of
multiplier Hopf algebras is explained in \cite{vd}, that of
  multiplier Hopf algebras with
integrals and their modular elements   in \cite{dvdwradford} where
the analogue of Radford's formula for $S^4$ is proved.

\begin{example} \cite[Section 2]{dvdw} For $H$ as in Example \ref{ore}, let $A =
\{ \lambda \leftharpoonup h | h \in H \} = H^{* \rm{rat}}$. Then it
is shown in \cite{dvdw} that $A$ is a discrete quasitriangular
multiplier Hopf algebra with integrals. Let $\omega_p \in A$ map
$g^ix^j$ to $\delta_{i,p}\delta_{j,0}$. The modular element in
$M(A)$ is $a= \sum_{p \in \mathbb{Z}} (-1)^p \omega_p$; the modular
element in $M(\hat{A})$ is $g^{-1}$. (Recall that by duality for
multiplier Hopf algebras the dual multiplier Hopf algebra $\hat{A}
\cong H$.) The multiplier $R = R^{-1} = \sum_{ p \in \mathbb{Z}} a^p
\otimes \omega_p \in M(A \otimes A)$ makes $(A,R)$ quasitriangular.
  The dual to
$R$ is the coquasitriangular structure from Example \ref{ore}.
\end{example}

For $(H,\sigma)$   coquasitriangular and co-Frobenius, consider the
multiplier Hopf algebra $A= H^{*{\rm rat}}= \lambda \leftharpoonup
H$. Let $R = \sigma \in (H \otimes H)^*$. Since $ \sigma*(\lambda
\leftharpoonup h \otimes \lambda \leftharpoonup l )=
\sigma(h_1,l_1)(\lambda \leftharpoonup h_2 \otimes \lambda
\leftharpoonup l_2)$ and  $
 (\lambda \leftharpoonup h \otimes \lambda \leftharpoonup l )* \sigma
= \sigma(ah_2,al_2)(\lambda \leftharpoonup h_1 \otimes \lambda
\leftharpoonup l_1)$  by \equref{b}, then $\sigma \in M(A \otimes
A):= \{m \in (H \otimes H)^* |m(A \otimes A), (A \otimes A)m
\subseteq A \otimes A \}$. A similar computation shows that
$(\lambda \leftharpoonup h \otimes \varepsilon)*\sigma \in A \otimes
M(A)$ with similar variations on the left and right.   The defining
properties for quasitriangularity \cite[2.3]{dvdw} then appear to
follow from the properties of $\sigma$.

\end{document}